\title{Regularity for quasilinear equations \\ on degenerate singular sets}
\author{by  \vspace{0.3cm}  \\    \textsc{eduardo v. teixeira} \\ \textit{\footnotesize Universidade Federal do Cear\'a}  \\ \textit{\footnotesize Fortaleza, CE, Brazil} }
\date{}
\newlength{\hchng}
\newlength{\vchng}
\def \div {\mathrm{div}}
\def \dist {\mathrm{dist}}
\def \suchthat {: }
\def \S {\mathscr{S}}
\newtheorem{theorem}{Theorem}[section]
\newtheorem{lemma}[theorem]{Lemma}
\newtheorem{proposition}[theorem]{Proposition}
\newtheorem{corollary}[theorem]{Corollary}
\theoremstyle{definition}
\theoremstyle{remark}
\numberwithin{equation}{section}
\newcommand{\intav}[1]{\mathchoice {\mathop{\vrule width 6pt height 3 pt depth  -2.5pt
\kern -8pt \intop}\nolimits_{\kern -6pt#1}} {\mathop{\vrule width
5pt height 3  pt depth -2.6pt \kern -6pt \intop}\nolimits_{#1}}
{\mathop{\vrule width 5pt height 3 pt depth -2.6pt \kern -6pt
\intop}\nolimits_{#1}} {\mathop{\vrule width 5pt height 3 pt depth
-2.6pt \kern -6pt \intop}\nolimits_{#1}}}
\begin{document}
\maketitle

\begin{abstract}
We prove a new, universal gradient continuity estimate for solutions to quasilinear equations with varying coefficients at points on its critical singular set of degeneracy  $\S(u)  := \{X \suchthat D u(X) = 0 \}$.  Our main Theorem reveals that along $\S(u)$,  $u$ is asymptotically as regular as solutions to constant coefficient equations. In particular, along the critical set $\S(u)$, $Du$ enjoys a modulus of continuity much superior than the, possibly low, continuity feature of the coefficients. The results are new even in the context of linear elliptic equations, where it is herein shown that $H^1$-weak solutions to $\text{div}\left (a_{ij}(X)Du \right ) = 0$, with $a_{ij}$ elliptic and Dini-continuous are actually $C^{1,1^{-}}$ along $\S(u)$. The results and insights of this work foster a new understanding on smoothness properties of solutions to degenerate or singular equations, beyond typical elliptic regularity estimates, precisely where the diffusion attributes of the equation collapse.  \medskip

\noindent \textit{MSC2010:} 35J70, 35J75, 35J62, 35B65.

\medskip

\noindent \textbf{Keywords:} Degenerate and singular elliptic equations, quasilinear problems, regularity theory.

%\tableofcontents 
\end{abstract}

%\tableofcontents

\section{Introduction}

In this paper we investigate local continuity behavior of the gradient of solutions, along the zero gradient patch, to singular or degenerate elliptic equations with varying coefficients: 
	\begin{equation} \label{Eq Introd}
 	-\div \left ( a(X, Du) \right ) = \mu,
\end{equation}
in a domain $\Omega \subset \mathbb{R}^n$, $n\ge 2$, where $\mu$ is a source function or a measure with finite total mass, $|\mu|(\Omega) <+\infty$.  The model for the class of equations we treat in this article is the non-homogeneous $p$-Laplacean equation with varying coefficients:
\begin{equation} \label{Eq p-Lap}
 	-\div \left ( \varsigma(X) |Du|^{p-2} Du \right ) = \mu,
\end{equation}
where $\varsigma$ is bounded away from $0$ and $\infty$ and satisfies a middle continuity assumption to be specified in Section \ref{section set-up}. Equation \eqref{Eq p-Lap}, or more generally Equation \eqref{Eq Introd} appear in several contexts, as they represent an anisotropic, quasilinear law for diffusion. 

\par
\medskip

Existence and fine regularity properties of solutions to Equation \eqref{Eq p-Lap} have been subject of massive study through the past half century, or so. Its mathematical analysis is rather more involved than its linear counterpart ($p=2$), mainly due to its singular or degenerate behavior along the zero gradient patch, the so called singular set of the solution $u$:
\begin{equation}\label{sing set}
	\S(u) := \left \{ X \in \Omega : Du(X) = 0  \right \}.
\end{equation}
At a point $X_0 \in \S$, the coefficients of equation \eqref{Eq p-Lap} either blow-up, in the case $1<p < 2$, or  else degenerate for $p>2$. Such features impel less efficient smoothing effects of the diffusion of the operator and the regularity theory for weak solutions to equation \eqref{Eq p-Lap} becomes a rather challenging mathematical issue. 

\par
\medskip

The first major result in the area is due to Ural�tseva, who proved in \cite{U}, for the degenerate case, $p\ge 2$, that $p$-harmonic functions, i.e., solutions to the homogeneous, constant coefficient equation,
\begin{equation}\label{p-harmonic}
	-\Delta_p u := - \div \left (|Du|^{p-2} Du \right ) = 0,
\end{equation} 
are locally of class $C^{1,\alpha_p}$ for some exponent $0 < \alpha_p < 1$. Uhlenbeck, in \cite{Uhl}, provided further extensions. Similar estimate  for the singular case, $1 < p < 2$, was established in \cite{DiB} and \cite{Lewis}. At this point is it interesting to notice that, alway from the singular set, $\S(u)$, $p$-harmonic functions are in fact quite smooth - real analytic. Such conclusion follows by standard elliptic regularity theory. Nevertheless, $C_\text{loc}^{1,\alpha_p}$  is indeed optimal, since along its singular set $\S(u)$, $p$-harmonic functions are not, in general, of class $C^2$, nor even $C^{1,1}$.

\par
\medskip

Regularity theory for varying coefficient equations is even more involved as continuity features of the coefficients $\varsigma(X)$ restrict even more the smoothing properties of the operator. The corresponding linear theory, $p=2$, goes back to the classical Schauder's {\it a priori} estimates, which state that solutions to 
$$
	-\div (a_{ij}(X) Du ) = 0, \quad \lambda \text{Id} \le a_{ij} \le \Lambda \text{Id}, ~ a_{ij} \in C^{0, \alpha},
$$
are locally of class $C^{1,\alpha}$. Such a result is optimal in several ways. Clearly if $a_{ij} \in C^{0, \alpha}$ and no better than that, one should not expect solutions to be smoother than $C^{1,\alpha}$, as simple 1d calculations show.  Also, it has been shown, see \cite{JMS}, that continuity of $a_{ij}$ is not enough to assure gradient bounds for solutions. 

\par
\medskip

The regularity theory for general nonlinear, varying coefficient equations, as in \eqref{Eq Introd} has become accessible just quite recently, through a rather sophisticated and powerful nonlinear potential theory, see \cite{DM, KM01, M01, M02, M03}. The   ultimate 	
scientific endowment of these recent works is a complete, essentially sharp, regularity theory for quasilinear equations as in \eqref{Eq Introd},  in terms of nonlinear potential properties of the datum $\mu$ (the so called nonlinear Wolff potential) and appropriate continuity of the coefficients, $X \mapsto a(X, \cdot)$. In vernacular terms (see Section \ref{section set-up} for precise discussion),  if the vector field $a$ has $C^{0, \epsilon}$ coefficients, $p\ge 2$, then 
\begin{equation} \label{reg DM}
	-\div \left (a(X, Du) \right ) = 0, \quad \text{implies} \quad h \in C_\text{loc}^{1, \beta}, \ \forall \beta<\frac{2\epsilon}{p},
\end{equation}
see \cite{KM01}, Theorem 1.4. It is worth noticing that such an estimate is asymptotically optimal, as explicit examples show.  

\par
\medskip

Yet in vernacular terms, the main, key result we prove in this present article endorses that as long as the coefficients of the equation are continuous enough as to assure local, {\it a priori} $C^1$ estimates, then on the degenerate singular set, $\S(u)$ -- aways regarded as the villain of the theory -- $u$ is asymptotically as regular as solutions to the constant coefficient equation. In particular at a singular point $X_0 \in \S(u)$, $Du$ has, in general, a much stronger modulus of continuity than the one confined by the coefficients of the equation, as in estimate \eqref{reg DM}, for $\epsilon \ll 1$.  

Even through the prism of the classical linear Schauder theory, the last paragraph should not, in principle, be read without some dose of perplexity.  Indeed, if $u$ is a solution to a uniformly elliptic, divergence form equation 
$$
	-\div (a_{ij}(X) Du) = 0, \quad \text{ with say, } \quad a_{ij} \in C^{0, \frac{1}{1000}},
$$
then, as mentioned above, we have known that $u$ is locally of class $C^{1,\frac{1}{1000}}$, and this regularity is optimal. However, a consequence of our main Theorem, see Corollary \ref{cor p=2}, is that at singular point, $X_0 \in \S(u)$, in fact $u$ is much smoother, for instance, 
$$
	 u \in C^{1, \frac{999}{1000}} \text{ at any point } X_0 \in  \S(u).
$$
That is a much stronger smoothness property than the one granted by the regular theory. 

\par
\medskip

We have postponed the  assumptions and precise statements of the results of this article to Section \ref{section main theorems}. For those who have read the last two paragraphs with some consternation, we concluded this Introduction explaining the heuristics that conducted us towards this pool of results. Initially, it is elucidative to comprehend that even though elliptic equations in divergence form are of 2nd order, in fact it reflects an oscillation balance around constants, rather than affine functions, as in the non-divergence theory. That explains, to some extent, why in many situations, the divergence form regularity theory {\it has one derivative less} than the non-divergence one. Now, if we want to show that an arbitrary  given function $u$ is of class $C^{1,\alpha}$ at, say, the origin, our task is to find an affine function $\ell(X)$ that approximates $u$ up to an error of order $\text{O}(r^{1+\alpha})$. If $0$ happens to be a singular point for $u$, i.e., $0\in \S(u)$, then the 1st order of the approximation $\ell$ should be zero, and we are led to control the oscillation balance of $u$ around a real constant. Say, if $u(0) = |Du(0)| = 0$, then proving $u \in C^{1,\alpha}$ at the origin reduces to verifying $|u(X)| = \text{O}(r^{1+\alpha})$. The contrapositive of the last logic assertion is that there exists a sequence of points $X_j \to 0$ for which $|X_j|^{1+\alpha} |u(X_j)|^{-1} = \text{o}(1)$. Expanding and normalizing this sequence appropriately gives a sequence $u_j$ that converges to an entire function $u_\infty$,  solution to a homogeneous, constant coefficient equation. The limiting function $u_\infty$ should grow no more than $|X|^{1+\alpha}$, otherwise by a discrete iterative procedure, we could conclude the aimed $C^{1,\alpha}$ estimate. However, since $a$-harmonic functions are locally of class $C^{1,\alpha_M}$, for a maximal exponent $\alpha_M$ strictly bigger than $\alpha$, we would conclude $u_\infty \equiv 0$, leading us to a contradiction on a lower bound of the $L^\infty(\partial B_1)$ norm of the approximating functions $u_j$. All these heuristic reasonings, or else alternative corresponding steps, will be made precise along the remaining Sections of this manuscript, ultimately providing a conclusive proof of the main Theorem. 

\par
\medskip
 
The paper is organized as follows: in Section \ref{section set-up} we gather few tools and known results that support both the statements and the proofs of our main results. In Section \ref{section main theorems} we present the Theorems we show in this paper and comment on some implications they have on the current literature. The proof of the most general result, Theorem \ref{main NH},  is developed through the remaining Sections \ref{section flat approx}, \ref{section flat improv}, \ref{section red} and \ref{section final proof}.

\section{Preliminaries and some known tools} \label{section set-up}

In this Section we will explain the mathematical set-up involved in the paper. We will also gather some Theorems and tools that support the underlying theory behind our results.

Throughout this paper we shall assume the following standard structural assumptions
on the vector field $a \colon \Omega \times \mathbb{R}^n \to \mathbb{R}$:
\begin{equation}\label{Hyp a}
    \left \{
        \begin{array}{rll}
            |a(X,\xi)| + |\partial_\xi a(X, \xi) | \cdot \xi| &\le&  \Lambda   |\xi|^{p-1}  \\
            \lambda   |\xi_1|^{p-2} \cdot |\xi_2|^2 &\le & \langle \partial_\xi a(X, \xi_1)\xi_2, \xi_2 \rangle \\
            |a(X, \xi) - a(Y, \xi)| &\le& \tilde{\Lambda} \omega(|X-Y|) \cdot  |\xi|^{p-1},
        \end{array}
    \right.
\end{equation}
for $2 - \frac{1}{n} < p < n$,  positive constants $0 < \lambda \le \Lambda < +\infty$ and $\tilde{\Lambda} \ge 1$. As usual in the literature, we could also include a parameter  $s \ge 0$ as to distinghish degenerate/singular equations to non-degenerate/non-singular ones. Per our primary motivation, we have chosen to work only in the genuine  degenerate/singular situation, $s=0$.

Initially, we recall that for constant coefficient equations, i.e., $\omega \equiv 0$, hereafter written simply as
\begin{equation}\label{a-harmonic}
	-\div\left ( a(Dh) \right ) = 0,
\end{equation}
it is well established,  that an $a$-harmonic function is locally of class $C^{1,\alpha_M}$, for some maximal exponent $0 < \alpha_M < 1$, that depends only upon $n$, $p$, $\lambda$ and $\Lambda$, see for instance \cite{DiB}. The precise value of $\alpha_M$ has been determined, for the model equation, $-\Delta_p u = 0$,  in the two-dimensional space, \cite{IM}.  Hereafter in this paper $\alpha_M$ denotes the maximal H\"older exponent of the gradient of an $a$-harmonic function, i.e., solution to \eqref{a-harmonic}. The hole of $\alpha_M$ in the regualrity theory for general equations satisfying  the structural assumption \eqref{Hyp a}
\begin{equation}\label{a-eq}
	-\div\left ( a(X, Dv) \right ) = \mu
\end{equation}
is clear. Since no solution to the non-homogeneous, varying coefficients equation 
should be expected to be more regular than $a$-harmonic functions,  the maximal exponent $\alpha_M$ is naturally an asymptotically upper barrier for any $C^{1,\alpha}$ regularity theory for equation \eqref{a-eq}.  That is, any universal $C^{1,\alpha}$ regularity estimate for solutions to \eqref{a-eq}  requires  $\alpha< \alpha_M$.

For equations with varying coefficients, the function $\omega$ represents a given modulus of continuity for the coefficients of the operator $a$. Hereafter in this paper, we shall assume the following Dini-type condition on the coefficients:
\begin{equation} \label{Dini}
		\begin{array}{llll}
			\displaystyle \int_0^R \omega(\tau)^{\frac{2}{p}} \dfrac{d\tau}{\tau} &<& +\infty,   &\text{ in the case } p\ge 2 \\
			\displaystyle \int_0^R \omega(\tau)^{1-\sigma} \dfrac{d\tau}{\tau} &<& +\infty,    &\text{ for some } \sigma > 0, \text{ in the case } 2- \frac{1}{n} < p < 2.
		\end{array}
\end{equation}
Notice that condition \eqref{Dini} is immediately satisfies for equations with $C^{0,\epsilon}$ coefficients, i.e. for $\omega(t) = t^\epsilon$. Such assumption, in each regime, namely $p\ge 2$ or $2 - \frac{2}{n} < p \le 2$, is essentially optimal for $C^1$ estimates to equations with varying coefficients, see Theorem \ref{C1 est} below. 

It is also well established that  no gradient control can be obtained for solutions to non-homogeneous equations, unless it is enforced a minimal integrability condition on the source $\mu$ of order $\sim L^{n^{+}}$. To be more precise, throughout this paper we shall work under the assumption,
\begin{equation} \label{measure cond}
	\mu(X) \in L^{q}(\Omega), \quad q > n.
\end{equation}
Such condition could be further relaxed; however we have chosen to state our results based on condition \eqref{measure cond} to keep the presentation cleaner.  We can now state the ultimate, main $C^1$ regularity estimate available for non-homogeneous equations with varying coefficients. 

\begin{theorem}[\cite{DM-CV}, Theorem 4 and \cite{KM01}, Theorem 1.6] \label{C1 est} Let $u \in W^{1,p}(\Omega)$ be a weak solution to Equation \eqref{Eq Introd}, where $a$ satisfies \eqref{Hyp a} and \eqref{Dini}. Assume further that $\mu$ satisfies \eqref{measure cond}. Then $u \in C^1_\text{loc}(\Omega)$. In additional, for any subdomain $\Omega' \Subset \Omega$, there exists a universal modulus of continuity, $\tau$, depending only on $\Omega'$, $\Omega$, $\|u\|_{L^p(\Omega)}$, $\lambda, \Lambda, \tilde{\Lambda}, \omega$ and $\|\mu\|_{L^q(\Omega)}$, such that
$$
	\left | Du(X) - Du(Y) \right | \le \tau(|X-Y|), \quad \forall X, Y \in \Omega'.
$$
\end{theorem}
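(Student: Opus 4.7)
My plan is to establish the $C^1$ estimate via a freeze-and-compare scheme on shrinking balls, combined with a Campanato-type iteration, in the spirit of the nonlinear potential theory of Duzaar--Mingione and Kuusi--Mingione. On each ball $B_r(X_0)\Subset \Omega'$, I would compare $u$ with the solution $h$ of the frozen homogeneous problem $-\div(a(X_0,Dh))=0$ agreeing with $u$ on $\partial B_r$. Since $h$ enjoys the universal $C^{1,\alpha_M}$ regularity recalled in the paragraph around \eqref{a-harmonic}, its Campanato excess decays geometrically, so the task reduces to controlling how far $u$ can drift from $h$ under the two perturbations present in the equation: the inhomogeneity $\mu$ and the spatial oscillation of the coefficients measured by $\omega$.

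Next I would derive the basic comparison estimate. Testing the difference equation for $u-h$ against $u-h$ itself and exploiting the monotonicity encoded by the second inequality in \eqref{Hyp a}, together with the structural bound $|a(X,\xi)-a(X_0,\xi)|\le \tilde{\Lambda}\,\omega(r)\,|\xi|^{p-1}$, yields an $L^p$-type control for $Du-Dh$ of size $\omega(r)^{2/p}$ in the degenerate range (respectively $\omega(r)^{1-\sigma}$ in the singular range) multiplied by the local $L^p$-norm of $Du$, plus a datum error of the form $r^{1-n/q}\|\mu\|_{L^q}$ raised to an appropriate power. The hypothesis $q>n$ forces the datum error to decay algebraically, and the Dini condition \eqref{Dini} is precisely what makes the coefficient error summable over dyadic scales.

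Combining the comparison with the $C^{1,\alpha_M}$ decay for $h$ applied on a shrunk ball $B_{\theta r}(X_0)$, one produces a single-step excess decay of the form $\mathcal{E}(\theta r)\le C\theta^{p\alpha_M}\mathcal{E}(r)+E(r)$, where $\mathcal{E}(r)$ denotes the mean $L^p$-oscillation of $Du$ on $B_r$ and $E(r)$ collects the two perturbation errors above. Choosing $\theta$ small so that $C\theta^{p\alpha_M}\le 1/2$ and iterating dyadically telescopes $E$ against a geometric series; the Dini hypothesis \eqref{Dini} together with the integrability $q>n$ then deliver a universal modulus $\tau$ for the accumulated excess, and a Campanato-type characterization of continuous functions translates this into the claimed modulus of continuity for $Du$.

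The main obstacle, in my view, is carrying out the comparison and iteration simultaneously across the degenerate range $p\ge 2$ and the singular range $2-1/n<p<2$. In the singular regime the coercivity weight $|\xi|^{p-2}$ in \eqref{Hyp a} blows up as $|\xi|\to 0$, so the natural energy functional is expressed through the Bojarski-type vector field $V(\xi)=|\xi|^{(p-2)/2}\xi$ rather than the gradient itself, and extra powers of $|Du|$ appearing in the comparison must be absorbed via a Gehring-type self-improvement of integrability. This is precisely why the Dini exponent takes the $1-\sigma$ form in \eqref{Dini}. Getting the algebra right in this regime is the delicate point; once the comparison estimate is in place with the correct exponents, the iteration reduces to the standard Campanato argument.
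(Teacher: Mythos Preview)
Your proposal is a reasonable sketch of the freeze-and-compare/Campanato iteration that underlies the cited references, but you should note that the paper itself does \emph{not} prove this statement: Theorem~\ref{C1 est} is quoted from \cite{DM-CV} and \cite{KM01} as a known tool, with no proof supplied. The paper's contribution begins only after this point, using Theorem~\ref{C1 est} as a black box to guarantee that $Du$ is continuous and hence that the singular set $\S(u)$ and the pointwise value $Du(0)$ make sense (see the passage of \eqref{prof comp CM eq 07} in the proof of Lemma~\ref{approx}).

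So there is nothing to compare your argument against in this paper. If your intent is to reconstruct the proof of the cited theorem, your outline is broadly faithful to the strategy in \cite{DM-CV,KM01}: comparison with the frozen homogeneous problem, $C^{1,\alpha_M}$ decay for the reference solution, and summation of the perturbation errors via the Dini condition \eqref{Dini} and the integrability $q>n$. The genuine technical work you flag in the singular range $2-1/n<p<2$ (use of $V(\xi)=|\xi|^{(p-2)/2}\xi$, Gehring-type higher integrability to absorb the bad powers) is indeed where the effort lies in those references, and your identification of the exponents $\omega(r)^{2/p}$ versus $\omega(r)^{1-\sigma}$ matches the two cases in \eqref{Dini}. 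For the purposes of the present paper, however, you may simply cite Theorem~\ref{C1 est} and move on.
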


It is also interesting to read \cite{Manf1, Manf2}, for earlier results in this line. As mentioned earlier in the Introduction, Theorem \ref{C1 est} is a result of a powerful nonlinear potential theory that has been developed and shaped up since the groundbreaking work of Kilpel\"ainen and Mal\'y, \cite{KM}. For our purposes, Theorem \ref{C1 est} is the starting point of the results we shall prove in this current article, which shall be explained within the next Section.

\section{Main results and their consequences}\label{section main theorems}

In this Section we present, in rigorous forms, the main results we establish in this work.  We  also comment on few implications they have and indicate some potential consequences in the current literature. Let us start off by a less general, but rather emblematic Theorem, that follows as an offspring result from the general estimate we shall prove. 

\begin{theorem}\label{main} Let $u \in W^{1,p}(\Omega)$ be a weak solution to 
\begin{equation} \label{Eq-0}
 	-\div \left (a(X, D u) \right ) = 0,
\end{equation}
where $a$ satisfies the structural conditions \eqref{Hyp a} and \eqref{Dini}. Let $X_0$ be an arbitrary point at the singular set of $u$:
$$
	\S(u) := \left \{ Y \in \Omega \suchthat  Du(Y) = 0 \right \}.
$$
Then $u \in C^{1,\alpha_M^{-}}$ at $X_0$. More precisely, given any $0 < \alpha < \alpha_M$, there exists a constant $C>0$ depending only upon $\|u\|_{L^p(\Omega)}$,  $\dist(X_0, \partial \Omega)$, $n, p, \lambda, \Lambda,  \tilde{\Lambda}, \omega,$ and $\alpha$,  such that
$$
	\sup\limits_{Y\in B_r(X_0)} \left | u(Y) -  u(X_0)  \right | \le C r^{1 + \alpha}.
$$
\end{theorem}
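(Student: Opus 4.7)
The plan is a geometric iteration by compactness, comparing $u$ at each dyadic scale with an $a(X_0,\cdot)$-harmonic profile and exploiting that the latter enjoys $C^{1,\alpha_M}$ regularity with $\alpha_M>\alpha$. First I would normalize: translate so that $X_0=0$, subtract a constant so that $u(0)=0$, rescale in space so that $B_1\subset\Omega$, and divide by a large constant so that $\|u\|_{L^\infty(B_1)}\le 1$; these operations preserve the structural class \eqref{Hyp a} with adjusted but admissible constants. Fix $\alpha\in(0,\alpha_M)$ once and for all.

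The heart of the argument is an approximation lemma: for every $\delta>0$ there exists $\epsilon>0$ such that whenever $v$ solves $-\div(\tilde a(X,Dv))=0$ in $B_1$ with $\tilde a$ satisfying \eqref{Hyp a} with modulus $\tilde\omega$ verifying $\tilde\omega(1)\le\epsilon$, $\|v\|_{L^\infty(B_1)}\le 1$, $v(0)=0$ and $Dv(0)=0$, then there is an $\tilde a(0,\cdot)$-harmonic function $h$ in $B_{1/2}$ with $\|v-h\|_{L^\infty(B_{1/2})}\le\delta$, $h(0)=0$ and $Dh(0)=0$. I would establish this by compactness/contradiction: along a putative failing sequence, Theorem \ref{C1 est} furnishes a uniform $C^1$ modulus, and a subsequence converges in $C^1_{\loc}$ to some $h$; the monotonicity of $\xi\mapsto a(X,\xi)$ in \eqref{Hyp a} permits passage to the limit in the weak formulation, identifying $h$ with a solution of the frozen equation, while the pointwise conditions at $0$ transfer under $C^1$ convergence. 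The $C^{1,\alpha_M}$ regularity of $h$ then yields $\sup_{B_\rho}|h|\le C\rho^{1+\alpha_M}$. Choosing $\rho$ small so that $C\rho^{1+\alpha_M}\le\tfrac12\rho^{1+\alpha}$ and then $\delta$ small so that $\delta\le\tfrac12\rho^{1+\alpha}$ produces the one-step decay
\[
    \sup_{B_\rho}|v|\le \rho^{1+\alpha}.
\]

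The iteration is routine once the lemma is in place. Define $v_k(X):=\rho^{-k(1+\alpha)}u(\rho^k X)$; a direct computation shows $v_k$ solves an equation with vector field $a_k(X,\xi):=\rho^{-k\alpha(p-1)}a(\rho^k X,\rho^{k\alpha}\xi)$, which satisfies \eqref{Hyp a} with the same ellipticity constants and with modulus of continuity controlled by $\omega(\rho^k\,\cdot)\le\omega(\rho^k)$ on $B_1$, while $Dv_k(0)=0$ is inherited from $Du(0)=0$. Choosing $k_0$ large enough that $\omega(\rho^{k_0})\le\epsilon$, the lemma applies to every $v_k$ with $k\ge k_0$, and an induction yields $\|v_{k+1}\|_{L^\infty(B_1)}\le 1$. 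Unscaling produces $\sup_{B_{\rho^k}}|u|\le \rho^{k(1+\alpha)}$ for $k\ge k_0$, and interpolation across consecutive dyadic scales gives the claimed $Cr^{1+\alpha}$ estimate for every $r\in(0,\rho^{k_0})$; the finitely many initial scales are absorbed into the constant by Theorem \ref{C1 est} and the $L^\infty$ bound on $u$.

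The hard part is the approximation lemma, specifically ensuring that in the limit one truly recovers the frozen constant-coefficient equation and that the pointwise conditions $h(0)=0$ and $Dh(0)=0$ persist in the limiting profile. The critical input is that the universal modulus supplied by Theorem \ref{C1 est} is stable under the small-oscillation regime of $\tilde\omega$, thereby forcing locally uniform convergence of gradients near the origin and allowing identification of the limit equation via the monotonicity in \eqref{Hyp a}. The Dini assumption \eqref{Dini} plays no summation role in the iteration; it enters only insofar as it is what makes Theorem \ref{C1 est} available, and hence what makes the compactness step viable.
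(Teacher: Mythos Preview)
Your proposal is correct and follows essentially the same compactness--iteration scheme as the paper (Lemma~\ref{approx}, Lemma~\ref{key lemma}, and Section~\ref{section final proof} specialized to $\mu\equiv 0$): approximate by an $\overline a$-harmonic profile with vanishing gradient at the origin, extract one-step decay from its $C^{1,\alpha_M}$ regularity, and iterate. The only cosmetic differences are that you work in $L^\infty$ rather than $L^p$, you invoke Theorem~\ref{C1 est} to get $C^1$-compactness directly (whereas the paper uses Caccioppoli plus weak convergence and a.e.\ gradient convergence, reserving Theorem~\ref{C1 est} solely to pin down $Dh(0)=0$), and by subtracting $u(0)$ at the outset you bypass the paper's auxiliary sequence $\tau_k\to u(0)$.
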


Theorem \ref{main} revels a surprising gain of smoothness of $u$, beyond the continuity of the coefficients,  precisely along the singular set of the equation: the most delicate region to be analyzed. The exactly same regularity result is assured to solutions with bounded sources, i.e.,
$$
	-\div \left (a(X, D u) \right ) = f(X) \in L^\infty(\Omega),
$$
or more generally, for equations with BMO datum. When we project Theorem \ref{main} to the non-degenerate, linear regime, $p=2$, it provides a remarkable estimate which, to the best of our knowledge, is also new.

\begin{corollary}\label{cor p=2}  Let $u\in H^1(\Omega)$ be a weak solution to
$$
	-\div (a_{ij}(X) Du) = 0,
$$
where $a_{ij}$ is a positive definite, Dini-continuous matrix. Then, at any gradient zero point, i.e., $D u(Z) = 0$, $u$ is of class $C^{1,\alpha}$ for all $0< \alpha < 1$. 
\end{corollary}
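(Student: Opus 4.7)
The plan is to deduce the corollary directly from Theorem \ref{main} applied in the regime $p=2$. Two verifications are needed: that the linear operator fits the structural framework \eqref{Hyp a}--\eqref{Dini}, and that the maximal H\"older exponent $\alpha_M$ for the corresponding constant coefficient equation can be taken arbitrarily close to $1$ in this setting.

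For the first, I would set $a(X,\xi) := a_{ij}(X)\,\xi$. Boundedness and uniform positive definiteness of $a_{ij}$ immediately yield the growth bound and the ellipticity estimate in \eqref{Hyp a} with $p=2$, $\lambda,\Lambda$ being the usual ellipticity constants, while the pointwise modulus of continuity of $a_{ij}$ supplies the function $\omega$ in the third line of \eqref{Hyp a}. At $p=2$ the condition \eqref{Dini} reads $\int_0^R \omega(\tau)\,\tfrac{d\tau}{\tau} < +\infty$, which is precisely the standard Dini-continuity hypothesis placed on $a_{ij}$ in the corollary. The source is zero, so \eqref{measure cond} is vacuous.

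For the second, any $a$-harmonic function in the present context solves $-\div(A\,Dh)=0$ for a fixed symmetric positive definite matrix $A$. A linear change of variables $Y = A^{1/2}X$ conjugates this to Laplace's equation $\Delta h = 0$, so $h$ is real analytic. In particular $Dh$ belongs to $C^{0,\beta}_{\loc}$ for every $\beta \in (0,1)$, and hence the universal exponent $\alpha_M$ appearing in Theorem \ref{main} may be chosen as close to $1$ as we wish; equivalently, the only asymptotic barrier imposed by the constant coefficient theory in the case $p=2$ is the trivial one, $\alpha_M = 1$.

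Combining these two observations, for any prescribed $\alpha \in (0,1)$ one selects some $\alpha_M \in (\alpha, 1)$ and invokes Theorem \ref{main} to obtain the pointwise $C^{1,\alpha}$ estimate at $Z$, with constant depending on the ellipticity and Dini data of $a_{ij}$, on $\|u\|_{L^2(\Omega)}$, on $\dist(Z,\partial\Omega)$, and on $\alpha$. The only genuinely substantive step in this argument is the identification $\alpha_M \nearrow 1$ in the previous paragraph; everything else is a straightforward specialization of the main theorem, and there is no further obstacle to overcome.
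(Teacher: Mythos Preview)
Your argument is correct and follows the same route as the paper: it reduces Corollary~\ref{cor p=2} to Theorem~\ref{main} by checking that the linear operator satisfies \eqref{Hyp a}--\eqref{Dini} with $p=2$ and by noting that constant coefficient solutions are harmonic after a linear change of variables, so that effectively $\alpha_M = 1$. The paper's justification is the single sentence ``Corollary~\ref{cor p=2} follows from Theorem~\ref{main} since harmonic functions, $\Delta h = 0$, are of class $C^{1,1}$,'' and your proposal is simply a more detailed unpacking of that sentence.
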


In fact Corollary \ref{cor p=2} follows from Theorem \ref{main} since harmonic functions, $\Delta h = 0$, are of class $C^{1,1}$.   These results follow as a consequence of a more general, non-homogeneous result we shall prove in this work:

\begin{theorem}\label{main NH} Let $u \in W^{1,p}(\Omega)$ be a weak solution to 
\begin{equation} \label{Eq}
 	-\div \left (a(X, D u) \right ) = \mu
\end{equation}
where $a$ satisfies the structural conditions \eqref{Hyp a}, \eqref{Dini} and $\mu$ satisfies \eqref{measure cond}. Let $X_0$ be an arbitrary point at the singular set of $u$:
$$
	\S(u) := \left \{ Y \in \Omega \suchthat  Du(Y) = 0 \right \}.
$$
Then 
$$
	u \in C^{1,\min\{\alpha_M^{-},  \frac{q-n}{(p-1)q} \}}
$$ 
at $X_0$. That is, given $\alpha \in (0, \alpha_M) \cap (0, \frac{q-n}{(p-1)q}  ]$, there exists a constant $C>0$ depending only upon $\|u\|_{L^p(\Omega)}$, $\dist(X_0, \partial \Omega)$, $n, p, \lambda, \Lambda,  \tilde{\Lambda}, \omega,$ $\|\mu\|_{L^q(\Omega)}$ and $\alpha$,  such that
$$
	\sup\limits_{Y\in B_r(X_0)} \left | u(Y) -  u(X_0)  \right | \le C   r^{1 + \alpha}.
$$
\end{theorem}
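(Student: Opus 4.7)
The plan is a discrete iteration: after translating so that $X_0 = 0$ and subtracting a constant, normalize $u(0)=0$ together with $\|u\|_{L^\infty(B_1)} \le 1$ (available from Theorem~\ref{C1 est} after a preliminary scaling) and aim at the geometric decay
\begin{equation*}
\sup_{B_{\rho^k}} |u| \le \rho^{k(1+\alpha)}, \qquad k = 0, 1, 2, \ldots,
\end{equation*}
for a fixed $\rho \in (0,1)$. Comparing this with an arbitrary scale $r \in [\rho^{k+1}, \rho^k]$ produces the pointwise $C^{1,\alpha}$ estimate at the origin. Because $Du(0) = 0$, the natural tangent object at the origin is a constant rather than an affine function, which is what permits the decay exponent to approach the full $a$-harmonic regularity exponent $\alpha_M$.

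The engine of the iteration is a flatness-improvement lemma, proved by compactness. Fix $\alpha \in (0, \alpha_M) \cap (0, \frac{q-n}{(p-1)q}\,]$ and choose $\rho$ so small that $C_0 \rho^{1+\alpha_M} \le \tfrac{1}{2}\rho^{1+\alpha}$, where $C_0$ is the universal $C^{1,\alpha_M}$ constant for $a(0,\cdot)$-harmonic functions. The claim is that there is $\delta>0$ such that any $v \in W^{1,p}(B_1)$ solving $-\div(a(X,Dv)) = \nu$ with $v(0) = 0$, $Dv(0) = 0$, $\|v\|_{L^\infty(B_1)} \le 1$, and $\|\omega\|_{L^\infty([0,1])} + \|\nu\|_{L^q(B_1)} \le \delta$, must satisfy $\sup_{B_\rho}|v| \le \rho^{1+\alpha}$. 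If not, a failing sequence $\{v_j\}$ has $\omega_j, \nu_j \to 0$; Theorem~\ref{C1 est} renders $\{v_j\}$ precompact in $C^1_{\loc}(B_1)$, and its limit $v_\infty$ is $a(0,\cdot)$-harmonic with $v_\infty(0)=0$ and $Dv_\infty(0)=0$. The $C^{1,\alpha_M}$ regularity of $v_\infty$ then gives $|v_\infty(X)| \le C_0|X|^{1+\alpha_M}$, hence $\sup_{B_\rho}|v_\infty| \le \tfrac{1}{2}\rho^{1+\alpha}$, contradicting $\sup_{B_\rho}|v_j| > \rho^{1+\alpha}$.

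To iterate, we set $v_k(X) := \rho^{-k(1+\alpha)} u(\rho^k X)$; then $v_k$ solves $-\div(a_k(X, Dv_k)) = \nu_k$ with $a_k(X,\xi) := \rho^{-k\alpha(p-1)} a(\rho^k X, \rho^{k\alpha}\xi)$, an operator satisfying \eqref{Hyp a} with the \emph{same} ellipticity constants and with coefficient modulus $\omega_k(t) = \omega(\rho^k t)$, while
\begin{equation*}
\|\nu_k\|_{L^q(B_1)} \le \rho^{k[\,1 - \alpha(p-1) - n/q\,]} \|\mu\|_{L^q(B_{\rho^k})}.
\end{equation*}
The exponent of $\rho$ is non-negative precisely when $\alpha \le \frac{q-n}{q(p-1)}$, which is the very threshold in the statement, while the Dini hypothesis \eqref{Dini} yields $\omega_k \to 0$ in the relevant sense. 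For $k \ge k_0$ the smallness required by the flatness lemma holds; since $Dv_k(0) = 0$ is preserved by the rescaling (because $Du(0) = 0$), the decay propagates inductively for all $k \ge k_0$, and the first $k_0$ scales are absorbed into the constant $C$.

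The main obstacle will be securing $C^1_{\loc}$-compactness of the contradiction sequence with a continuity modulus uniform along the sequence. Theorem~\ref{C1 est} provides a modulus $\tau$ depending \emph{a priori} on $\omega$ and $\|\nu\|_{L^q}$, so along a bad sequence with $\omega_j,\nu_j \to 0$ one must verify $\tau$ can be chosen independently of $j$; in the singular regime $2 - \frac{1}{n} < p < 2$, this is precisely where the stronger integral assumption on $\omega$ in \eqref{Dini} is essential. Once this uniform equicontinuity is in hand, classical stability of weak solutions under coefficient convergence delivers the $a(0,\cdot)$-harmonic limit, and the contradiction argument, and hence the whole iteration, closes.
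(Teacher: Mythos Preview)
Your proposal is correct and follows essentially the same strategy as the paper: a compactness-based flatness-improvement lemma, followed by an iteration via rescaling, with the exponent threshold $\alpha \le \frac{q-n}{q(p-1)}$ emerging from the scaling of the source term. The paper splits your flatness lemma into two pieces---an $L^p$ approximation by an $\overline{a}$-harmonic function $h$ with $Dh(0)=0$ (Lemma~\ref{approx}), and then a one-step decay estimate using the $C^{1,\alpha_M}$ regularity of $h$ (Lemma~\ref{key lemma})---whereas you merge these into a single contradiction argument using $C^1_{\loc}$ compactness directly. The paper also tracks a sequence of constants $\tau_k \to u(0)$ in the iteration rather than normalizing $u(0)=0$ at the outset, and works throughout in $L^p$ averages rather than $L^\infty$. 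These are streamlinings on your part rather than substantive differences; the underlying mechanism is identical.

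One remark on the point you flag as the ``main obstacle'': the uniform $C^1$ modulus along the contradiction sequence is not an issue. Since $\omega_j \le \omega$ and $\|\nu_j\|_{L^q} \le 1$ for $j$ large, Theorem~\ref{C1 est} applied with the \emph{fixed} data $(\omega, 1)$ already gives an equicontinuity modulus independent of $j$; no additional argument is needed. Likewise, at the borderline $\alpha = \frac{q-n}{q(p-1)}$ the scaling exponent for $\nu_k$ vanishes, so $\|\nu_k\|_{L^q}$ does not decay along the iteration---you correctly absorb this by the preliminary scaling that makes $\|\mu\|_{L^q}$ small, exactly as the paper does in Proposition~\ref{prop red}.
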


\medskip

It is interesting to verify that   $\lim\limits_{q\to \infty} \frac{q-n}{(p-1)q} = \frac{1}{p-1}.$  In the next Sections we shall deliver a proof of Theorem \ref{main NH}. The strategy of the proof is based on a compactness approach and it is inspired by the revolutionary work of Luis Caffarelli on $W^{2,p}$ estimates for fully nonlinear equations, \cite{C1}. See also \cite{Teix} for similar reasoning on sharp regularity theory for quasilinear Poisson equations.
%%%%%%%%%%
%%%%%%%%%%
%%%%%%%%%%

\section{Singular approximation} \label{section flat approx}

In this Section we prove the first main ingredient we need  in the proof of Theorem \ref{main NH}. It accounts a refined singular, flat approximation result that assures that, if the right-hand-side of the equation, $\mu$, is close to zero, the coefficients are close to constant and $u$ is flat enough at $0$, then it is possible to find a solution to the homogeneous, constant coefficient equation, $h$, with $0 \in \S(h)$, that is as close as we wish to $u$ in an inner domain.

\begin{lemma}\label{approx} Let $u \in W^{1,p}(B_1)$ be a weak solution to \eqref{Eq}, where $a$ satisfies the structural conditions \eqref{Hyp a} and \eqref{Dini}, normalized as to $\intav{B_1} |u|^p dX \le 1$.  Then, given $\delta>0$, there exists a constant $\varepsilon > 0$, depending only on $\delta,  n, p, \lambda, \Lambda,  \tilde{\Lambda}$, and $\omega$, such that if 
\begin{eqnarray}
	|Du(0)| &\le& \varepsilon \label{small cond CL 1}\\
	\|\mu\|_{{L^q}}  &\le& \varepsilon \label{small cond CL 2}\\
	|a(X,\xi) - a(0, \xi)| & \le& \varepsilon \cdot |\xi|^{p-1} , \label{small cond CL 3}
\end{eqnarray}
then there exists a function $h \in W^{1,p}(B_{1/2})$, satisfying
 \begin{eqnarray}
	 -\div (\overline{a}(D h)) &=& 0,  ~ B_{1/2} \label{h -CL 1}, \\ 
	 D h(0) &=& 0 , \label{h -CL 2}
\end{eqnarray}
for some constant coefficient vector field $\overline{a}$, satisfying \eqref{Hyp a}, such that
\begin{equation}\label{thesis - CL}
	  \intav{B_{1/2}} |u - h|^p dX \le \delta^p.
\end{equation}
\end{lemma}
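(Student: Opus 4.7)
The plan is to argue by contradiction and compactness, in the spirit of Caffarelli's approximation scheme referenced in the paper.

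Suppose the conclusion fails. Then there exist $\delta_0>0$ and sequences $\varepsilon_k \downarrow 0$, vector fields $a_k$ satisfying \eqref{Hyp a}, measures $\mu_k$ with $\|\mu_k\|_{L^q}\le \varepsilon_k$, and weak solutions $u_k\in W^{1,p}(B_1)$ of $-\div(a_k(X,Du_k))=\mu_k$ with
\begin{equation*}
\intav{B_1}|u_k|^p\,dX \le 1, \qquad |Du_k(0)|\le \varepsilon_k, \qquad |a_k(X,\xi)-a_k(0,\xi)|\le \varepsilon_k |\xi|^{p-1},
\end{equation*}
yet $\intav{B_{1/2}}|u_k-h|^p\,dX > \delta_0^p$ for every admissible approximant $h$. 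The goal is to extract a limit that contradicts this.

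First I would apply Theorem \ref{C1 est} to the sequence $\{u_k\}$ on, say, $B_{3/4}\Subset B_1$. All parameters entering the universal modulus $\tau$ there ($n,p,\lambda,\Lambda,\tilde\Lambda,\omega$, the norms $\|u_k\|_{L^p}\le 1$ and $\|\mu_k\|_{L^q}\le 1$) are controlled independently of $k$. Therefore $\{u_k\}$ is precompact in $C^1(\overline{B_{3/4}})$, and up to subsequence $u_k \to u_\infty$ in $C^1_{\loc}(B_{3/4})$. In particular $Du_k \to Du_\infty$ uniformly, so $Du_\infty(0)=0$ from \eqref{small cond CL 1}. Simultaneously, the maps $\overline{a}_k(\xi):=a_k(0,\xi)$ satisfy \eqref{Hyp a} with uniform constants; since they are locally Lipschitz in $\xi$ with uniform bounds on compact sets of $\mathbb{R}^n$, Arzel\`a--Ascoli yields (along a further subsequence) a limit $\overline{a}$ satisfying \eqref{Hyp a}, with $\overline{a}_k\to \overline{a}$ locally uniformly in $\xi$.

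Next I would pass to the limit in the PDE. Using \eqref{small cond CL 3}, for any test function $\varphi \in C^\infty_c(B_{1/2})$,
\begin{equation*}
\int a_k(X, Du_k)\cdot D\varphi\, dX = \int [a_k(X,Du_k)-\overline{a}_k(Du_k)]\cdot D\varphi\,dX + \int \overline{a}_k(Du_k)\cdot D\varphi\,dX.
\end{equation*}
The first integrand is bounded by $\varepsilon_k |Du_k|^{p-1}|D\varphi|$, which tends to $0$ by the uniform $C^1$ bound. For the second, the uniform convergence $Du_k\to Du_\infty$ combined with $\overline{a}_k\to\overline{a}$ uniformly on compacts gives $\overline{a}_k(Du_k)\to \overline{a}(Du_\infty)$ uniformly. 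Since $|\mu_k(\varphi)|\le \|\mu_k\|_{L^q}\|\varphi\|_{L^{q'}}\to 0$, the limit $u_\infty$ is a weak solution of $-\div(\overline{a}(Du_\infty))=0$ in $B_{1/2}$, with $Du_\infty(0)=0$. Hence $h:=u_\infty$ is an admissible approximant, yet $u_k\to u_\infty$ in $L^p(B_{1/2})$ contradicts the standing assumption $\intav{B_{1/2}}|u_k-h|^p>\delta_0^p$.

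The main obstacle I anticipate is the limit passage in the nonlinear flux term, which typically requires strong convergence of the gradients. Here this is supplied for free by the universal $C^1$ estimate of Theorem \ref{C1 est} — this is precisely why the Dini hypothesis \eqref{Dini} and the condition \eqref{measure cond} on $\mu$ are invoked as standing assumptions, rather than being built into the smallness regime. A minor subtlety is ensuring that $\overline{a}$ inherits the full set of structural conditions \eqref{Hyp a}; this follows because ellipticity and $p$-growth pass to uniform limits, and \eqref{small cond CL 3} guarantees the limiting vector field is genuinely $X$-independent.
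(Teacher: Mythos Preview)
Your proof is correct and follows the same contradiction--compactness scheme as the paper. The only organizational difference is that you invoke Theorem~\ref{C1 est} at the outset to obtain $C^1$ compactness of $\{u_k\}$ (hence uniform gradient convergence), whereas the paper first extracts weak $W^{1,p}$/strong $L^p$ convergence via Caccioppoli estimates and a.e.\ gradient convergence by truncation, and only appeals to Theorem~\ref{C1 est} at the end to secure $Du_\infty(0)=0$. Your route is slightly cleaner in that the limit passage in the flux and the identification of $Du_\infty(0)$ come for free from uniform $C^1$ convergence; the paper's route is marginally more self-contained in the compactness step but still ultimately relies on the same $C^1$ theory.
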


\begin{proof}
Let us assume, for the purpose of contradiction, that the thesis of the Lemma fails. If so, there would exist a $\delta_0 > 0$ and a sequence $
 u_k \in W^{1,p}(B_1),$ with
 \begin{equation}\label{prof comp CM eq 00}
     \intav{B_1} |u_k(X)|^p dX \le 1
 \end{equation}
 for all $k\ge1$, sequences $a_k$ and  $\mu_k$, with  
\begin{equation}\label{prof comp CM eq 01}
           - \div ( a_k (X, Du_k) ) = \mu_k \text{ in } B_1,
\end{equation}
where $a_k$ satisfies \eqref{Hyp a},  and \eqref{Dini} and
\begin{eqnarray}
	|Du_k(0)| &=& \text{o}(1) \label{prof comp CM eq 01.1}\\
	\|\mu_k\|_{{L^q}}  &=& \text{o}(1)  \label{prof comp CM eq 01.2}\\
	|a_k(X,\xi) - a_k(0, \xi)| & =& \text{o}(1)  \cdot  |\xi|^{p-1}, \label{prof comp CM eq 01.3}
\end{eqnarray}
for $\text{o}(1) \to 0$, as $k \to 0$; however
\begin{equation}\label{prof comp CM eq 03}
            \intav{B_{1/2}} |u_k(X) - h(X)|^p dX \ge \delta_0, \quad \forall k \ge 1,
\end{equation}
for any solution $h$ to a homogeneous, constant coefficient
equation as in \eqref{h -CL 1}, in $B_{1/2}$, satisfying $D h(0) = 0$ as entitled in \eqref{h -CL 2}. From the normalization assumption  \eqref{prof comp CM eq 00}, equation \eqref{prof comp CM eq 01} and natural bounds coming from the structural assumption \eqref{Hyp a} and \eqref{prof comp CM eq 01.2}, it follows through Caccioppoli's type energy estimates that
$$
    \int_{B_{1/2}} |D u_k|^p dX \le C,
$$
for all $k \ge 1$. Thus, by compactness embedding and classical truncation arguments, there exists a function $u \in W^{1,p}(B_{1/2})$ for which, up to a subsequence,
\begin{eqnarray}
    u_k &\rightharpoonup&  u_\infty \text{ in } W^{1,p}(B_{1/2}) \label{prof comp CM eq 04}  \\
    u_k &\to & u_\infty \text{ in } L^p(B_{1/2}) \label{prof comp CM eq 04.1}  \\
    D u_k(X) &\to& D u_\infty(X) \text{ for a.e. } X  \in B_{1/2} \label{ae conv grad},
\end{eqnarray}
Also, by Ascoli Theorem, there exists a subsequence under which
$a_{k_j}(0, \cdot) \to \overline{a}(0, \cdot)$ locally uniformly.
Hence, from \eqref{prof comp CM eq 01.3} for any $X \in B_{1/2}$ and $\xi \in B_R$, for arbitrary $R >0$ fixed, there holds
\begin{equation}\label{prof comp CM eq 05}
    |a_{k_j}(X, \xi) - \overline{a}(0, \xi)| \le |a_{k_j}(X, \xi) - a_{k_j}(0, \xi)| + | a_{k_j}(0, \xi) -
    \overline{a}(0, \xi)| = \text{o}(1).
\end{equation}
That is, for we have verified
\begin{equation}\label{prof comp CM eq 05.5}
	a_{k_j}(X, \xi) \to \overline{a}(0, \xi) \text{ locally
uniformly in } B_{1/2} \times \mathbb{R}^n.
\end{equation}
Given a test function $\phi \in W^{1,p}_0(B_{1/2})$, in view of \eqref{prof comp CM eq 01.2},
\eqref{prof comp CM eq 04}, \eqref{ae conv grad}, \eqref{prof comp CM eq 05} and \eqref{prof comp CM eq 05.5} we have
$$
    \begin{array}{lll}
        \displaystyle \int_{B_{1/2}} \overline{a}(0, D u_\infty) \cdot D \phi dX &=& \displaystyle \int_{B_{1/2}} a_k(X, D u_k) \cdot D \phi dX +\text{o}(1) \\
        & = & \text{o}(1),
    \end{array}
$$
as $k \to \infty$. Since $\phi$ was arbitrary, we conclude $u$ is a  solution to a constant coefficient equation in $B_{1/2}$, i.e.,
\begin{equation}\label{prof comp CM eq 06}
	-\div \left ( \overline{a}(0, Du_\infty)  \right ) = 0, \text{ in } B_{1/2}.
\end{equation}
Also, by the $C^{1}$ regularity theory for varying coefficient equations, Theorem \ref{C1 est},  jointly with the asymptotically flat assumption \eqref{prof comp CM eq 01.1}, yield the pointwise value
\begin{equation}\label{prof comp CM eq 07}
	D u_\infty(0) =0.
\end{equation} 
Finally, confronting the conclusions obtained in \eqref{prof comp CM eq 06} and \eqref{prof comp CM eq 07} with \eqref{prof comp CM eq 04.1} and  \eqref{prof comp CM eq 03}, we reach a contradiction for $k \gg 1$. The Lemma is proven.
\end{proof}

\section{Iterative flatness improvement} \label{section flat improv}

In the previous Section we have established an approximation result that provides a {\it flat}, $C^{1,\alpha_M}$ smooth function near a generic solution $u$ to Equation \eqref{Eq Introd}, provided $Du(0)$ and $\mu$ are near zero, and $a(X,\xi) \sim a(0,\xi)$. In this Section we prove essentially a step-one, discrete version of Theorem \ref{main NH}.  

\begin{lemma}\label{key lemma} Let $u \in W^{1,p}(B_1)$ be a weak solution to \eqref{Eq}, where $a$ satisfies the structural conditions \eqref{Hyp a} and \eqref{Dini} and assume $\intav{B_1} |u|^p dX \le 1$. Then, given $0< \alpha < \alpha_M$, there exist constants $0 < \varepsilon_0 < 1$, $0 < \varrho < \frac{1}{2}$, depending only upon $n, p, \lambda, \Lambda,   \tilde{\Lambda},  \omega$ and $\alpha$, such that if
\begin{eqnarray}
	|Du(0)| &\le& \varepsilon_0 \label{small cond key lemma 1}\\
	\|\mu\|_{{L^q}}  &\le& \varepsilon_0 \label{small cond key lemma 2}\\
	|a(X,\xi) - a(0, \xi)| & \le& \varepsilon_0 \cdot  | \xi|^{p-1}, \label{small cond key lemma 3}
\end{eqnarray}
then, we can find a universally bounded real constant $\tau \in \mathbb{R}$, i.e., $|\tau|< C(n, p, \lambda, \Lambda)$, such that
\begin{equation}\label{thesis key lemma}
	\intav{B_\varrho} | u(X) - \tau|^p dX \le \varrho^{p(1+\alpha)}.
\end{equation}
\end{lemma}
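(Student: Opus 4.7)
The plan is to run a one-step compactness/flatness-improvement argument in the spirit of Caffarelli, using Lemma \ref{approx} to approximate $u$ by an $a$-harmonic function with a zero of its gradient at the origin, and then exploiting the $C^{1,\alpha_M}$ regularity available for that limiting profile. Since we want $\alpha < \alpha_M$, there is strict room to improve flatness at a geometric scale $\varrho$.

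First, I would fix $\alpha \in (0,\alpha_M)$ and let $h$ be the approximating function produced by Lemma \ref{approx}, which solves the constant coefficient equation $-\div(\overline{a}(Dh)) = 0$ in $B_{1/2}$ with $Dh(0) = 0$. Standard interior estimates for $\overline{a}$-harmonic functions, combined with $\intav{B_{1/2}}|h|^p dX \le 2^{n+1}$ (a consequence of $\intav{B_1}|u|^p dX \le 1$ together with $\intav{B_{1/2}}|u - h|^p dX \le \delta^p \le 1$), give $\|h\|_{L^\infty(B_{1/4})} \le C_\star$ universally, and the maximal $C^{1,\alpha_M}$ estimate for $\overline{a}$-harmonic functions yields
$$
    \sup_{X \in B_\varrho} |h(X) - h(0)| \le C_0 \, \varrho^{1+\alpha_M}
$$
for all $\varrho \le 1/4$, with $C_0$ universal (this uses $Dh(0) = 0$ crucially). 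Set $\tau := h(0)$; then $|\tau| \le C_\star$ is universally bounded, as required.

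Next I would choose the parameters. Using $(a+b)^p \le 2^{p-1}(a^p + b^p)$ and inflating the average from $B_{1/2}$ to $B_\varrho$,
$$
    \intav{B_\varrho} |u - \tau|^p dX \le 2^{p-1}\left( \frac{|B_{1/2}|}{|B_\varrho|} \intav{B_{1/2}} |u-h|^p dX + \sup_{B_\varrho} |h-h(0)|^p \right) \le 2^{p-1}\left(\frac{\delta^p}{(2\varrho)^n} + C_0^p \varrho^{p(1+\alpha_M)}\right).
$$
I would first pick $\varrho \in (0, 1/4)$ so small that $2^{p-1} C_0^p \, \varrho^{p(1+\alpha_M)} \le \tfrac{1}{2}\varrho^{p(1+\alpha)}$; this is possible precisely because $\alpha < \alpha_M$, and $\varrho$ depends only on $n,p,\lambda,\Lambda,\alpha$. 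Then choose $\delta > 0$ so that $2^{p-1-n}\varrho^{-n}\delta^p \le \tfrac{1}{2}\varrho^{p(1+\alpha)}$, which fixes $\delta$ in terms of the same universal data. Finally, Lemma \ref{approx} applied to this $\delta$ produces the threshold $\varepsilon_0$; the smallness hypotheses \eqref{small cond key lemma 1}--\eqref{small cond key lemma 3} then deliver the approximating $h$, and the display above yields $\intav{B_\varrho}|u-\tau|^p dX \le \varrho^{p(1+\alpha)}$.

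The main obstacle, and the reason the argument works, is ensuring that the constant $C_0$ controlling $|h - h(0)|$ on $B_\varrho$ truly is universal and independent of the particular approximating vector field $\overline{a}$ produced by Lemma \ref{approx}: this requires using the universal $C^{1,\alpha_M}$ estimate valid for any constant coefficient operator satisfying \eqref{Hyp a}, not merely for a fixed limiting $\overline{a}$. Once that is in place, the scale $\varrho$ and thus $\delta$ and $\varepsilon_0$ can be fixed \emph{before} invoking Lemma \ref{approx}, which is what makes the compactness scheme quantitative and iterable in the next section.
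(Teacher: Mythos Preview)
Your proposal is correct and follows essentially the same route as the paper: invoke Lemma \ref{approx} to obtain an $\overline{a}$-harmonic $h$ with $Dh(0)=0$, set $\tau=h(0)$, use the universal $C^{1,\alpha_M}$ estimate to bound $|h-h(0)|\le C_0\varrho^{1+\alpha_M}$, and then choose $\varrho$ (using $\alpha<\alpha_M$) followed by $\delta$ and finally $\varepsilon_0$. Your version is in fact slightly more careful in tracking constants and in stressing that $C_0$ must be uniform over all constant-coefficient $\overline{a}$ satisfying \eqref{Hyp a}, which is exactly the point that makes the scheme iterable.
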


\begin{proof}
For $\delta > 0$ to be chosen later, let $h$ be a solution to a constant coefficient equation 
$$
	-\div \left ( \overline{a}(Dh) \right ) = 0, \ B_{1/2}
$$
satisfying $0 \in \S(h)$ that is $\delta$-close to $u$ in the $L^p$-norm. The existence of such a function has been granted by Lemma \ref{approx}. From $C^{1,\alpha_M}$ regularity theory for constant coefficient equations,  there exists a constant $C$ depending only on universal parameters, such that
$$
    |h(X) - h(0)| \le C|X|^{1+\alpha_M}.
$$
Since $\|h\|_{L^p} \le C$, by $L^\infty$ bounds,
$$
    |h(0)| \le C.
$$
We now estimate, for $\varrho >0$ to be adjusted {\it a posteriori},
$$
    \begin{array}{lll}
        \displaystyle \intav{B_{\varrho}} |u(X) - h(0)|^p dX & \le &   2^{p-1} \left (\displaystyle \intav{B_{\varrho}} |u(X) - h(X)|^p dX +   \intav{B_{\varrho}} |h(X) - h(0)|^p dX  \right ) \\
        & \le & 2^{p-1} \delta^p \varrho^{-n} + 2^{p-1} C\varrho^{p(1+\alpha_M)}.
    \end{array}
$$
Since $0 < \alpha < \alpha_M$, it is possible to select $\varrho$ small enough as to assure
$$
     2^{p-1} C\varrho^{p(1+\alpha)} \le \dfrac{1}{2} \varrho^{p(1+\alpha_M)}.
$$
Once selected $\varrho$, as indicated above, we set
$$
    \delta := \dfrac{1}{2} \varrho^{\frac{n}{p} + 1+ \alpha},
$$
which determines the smallness condition $\varepsilon_0$, in the statement of this Lemma, through the singular approximation Lemma \ref{approx}. The proof is concluded. 
\end{proof}

\bigskip

%%%
%%%
%%%

The strategy to be followed in the next Sections in order to deliver a conclusive proof for Theorem \ref{main NH} is to iterate Lemma \ref{key lemma} for dyadic balls with appropriate geometric decaying radii. However, initially we need to show that the proof of Theorem \ref{main NH} can be reduced to the smallness assumptions within the statement of Lemma \ref{key lemma}. This latter task is the objective of our next Section.

\section{The nature of scaling and reduction to  smallness regime} \label{section red}

We start off this Section by commenting on the scaling nature of Equation \eqref{Eq Introd}. Namely, if $u \in W^{1,p}$ solves  Equation \eqref{Eq Introd}, say in $B_1$, then for $X_0 \in B_1$, $0<\zeta< 1-|X_0|$ and $\kappa > 0$, the re-scaled function $v \in W^{1,p}(B_1)$, defined by
$$
	v(X):=  \dfrac{u(X_0 + \zeta X)}{\kappa},
$$
satisfies,
$$
	-\div \left (  a(X_0 + \zeta X, Dv) \right ) = \tilde{\mu},
$$
for some measurable function $\tilde{\mu}$. In view of \eqref{Hyp a}, we estimate:
\begin{equation}\label{scaling est prel}
	\|\tilde{\mu}\|_{L^q(B_1)} \le   \dfrac{\zeta^p}{\kappa^{p-1}} \cdot \zeta^{-\frac{n}{q}} \|\mu\|_{L^q(B_1)}.
\end{equation}
Also, it readily follows from change of variables that
\begin{equation}\label{scaling est 02}
	 \intav{B_1} |v(X)|^p \le \dfrac{1}{\kappa^p \cdot \zeta^n} \intav{B_1} |u(X)|^p dX.
\end{equation}
Finally, if we define 
$$
	\tilde{a}(X, \xi) := a(X_0 + \zeta X, \xi),
$$	
the $\zeta$-variable expansion together with the continuity assumption on the coefficients of $a$, namely assumption \eqref{Dini}, gives
\begin{equation}\label{scaling est 03}
	\left | \tilde{a}(X, \xi) - a(0, \xi) \right | \le \tilde{\Lambda} \omega(\zeta |X|) |\xi|^{p-1}
\end{equation}

These simple facts show that in order to establish the proof of Theorem \ref{main NH}, we can start off under the assumptions of Lemma \ref{key lemma}. That is, we have the following logistic device:

\begin{proposition}\label{prop red} Assume we can prove, under the assumptions of Lemma \ref{key lemma}, that if $0 \in \S(v)$, then
\begin{equation}\label{proved under hyp key lemma}
	|v(X) - v(0)| \le C |X |^{1+\alpha},
\end{equation}
 for a constant $C$ depending only on  $n, p, \lambda, \Lambda,   \tilde{\Lambda},  \omega$ and $\alpha$. Then, for any function $u \in W^{1,p}(\Omega)$, solution to Equation \eqref{Eq Introd}, with $a$ satisfying \eqref{Hyp a}, \eqref{Dini} and $\mu$ under condition \eqref{measure cond}, there holds,
\begin{equation}\label{proved general}
	|u(Y) - u(Y_0)| \le \tilde{C} |Y-Y_0|^{1+\alpha},
\end{equation}
for any point $Y_0 \in \S(u)$, where $\tilde{C}>0$ is another constant that depends only on $\|u\|_{L^p(\Omega)}$, $\dist(Y_0, \partial \Omega)$,  $n, p, \lambda, \Lambda,   \tilde{\Lambda},  \omega$, $\|\mu\|_{L^q(\Omega)}$ and $\alpha$.
\end{proposition}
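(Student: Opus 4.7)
The strategy is to rescale an arbitrary solution $u$ around a singular point $Y_0\in\S(u)$ so that the rescaled function $v$ simultaneously meets all three smallness assumptions of Lemma \ref{key lemma}, apply the hypothesis \eqref{proved under hyp key lemma} to $v$, and then unscale. Set $d := \tfrac{1}{2}\dist(Y_0,\partial\Omega)$ and, for parameters $0<\zeta<d$ and $\kappa>0$ to be fixed below, define
\[
	v(X) := \dfrac{u(Y_0 + \zeta X)}{\kappa}, \qquad X \in B_1.
\]
Then $v$ solves $-\div(\tilde{a}(X,Dv)) = \tilde{\mu}$, with $\tilde{a}(X,\xi):=a(Y_0+\zeta X,\xi)$ obeying \eqref{Hyp a}. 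Because $Y_0\in \S(u)$, one automatically has $Dv(0) = (\zeta/\kappa) Du(Y_0) = 0$, so \eqref{small cond key lemma 1} holds for free.

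Next I would tune the two parameters. I first select $\zeta$: by the Dini condition \eqref{Dini} the modulus $\omega$ vanishes at the origin, so I can pick $\zeta\in(0,d]$, depending only on $\varepsilon_0, \tilde\Lambda$ and $\omega$, small enough that $\tilde\Lambda\,\omega(\zeta)\le \varepsilon_0$; then \eqref{scaling est 03} delivers \eqref{small cond key lemma 3} for $\tilde{a}$. With $\zeta$ now frozen, I pick $\kappa$ large enough to enforce the two remaining conditions simultaneously: by \eqref{scaling est 02} I need $\kappa \ge \zeta^{-n/p}\|u\|_{L^p(\Omega)}$ to secure the normalization $\intav{B_1}|v|^p\le 1$, and by \eqref{scaling est prel} I need $\kappa^{p-1} \ge \varepsilon_0^{-1}\zeta^{p-n/q}\|\mu\|_{L^q(\Omega)}$ to secure \eqref{small cond key lemma 2}. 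Setting $\kappa$ equal to the maximum of these two lower bounds fixes $\kappa$ as a constant depending only on the quantities listed in the statement of the proposition.

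Having verified that $v$ falls under the working hypothesis, I invoke \eqref{proved under hyp key lemma} and reverse the scaling: for any $Y\in B_\zeta(Y_0)$, writing $X=(Y-Y_0)/\zeta$,
\[
	|u(Y)-u(Y_0)| = \kappa\,|v(X)-v(0)| \le C\kappa\,\zeta^{-(1+\alpha)}|Y-Y_0|^{1+\alpha}.
\]
For $Y$ with $\zeta \le |Y-Y_0| \le d$, the estimate is trivial once I absorb $\|u\|_{L^\infty_{\mathrm{loc}}}$ (which is controlled by Theorem \ref{C1 est}) into a large prefactor, since $|Y-Y_0|^{1+\alpha}\ge \zeta^{1+\alpha}$ on this annulus. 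Combining the two cases yields \eqref{proved general} with a constant $\tilde{C}$ of the claimed dependencies.

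The only real subtlety, which I expect to be the main bookkeeping point rather than a genuine obstacle, is the interplay between the smallness parameters: $\varepsilon_0$ produced by Lemma \ref{key lemma} depends on $\omega$, and our choice of $\zeta$ in turn depends on $\varepsilon_0$ and $\omega$. There is no circularity since $\omega$ is fixed a priori from the structural hypothesis. One should also check that Lemma \ref{key lemma} applies to $v$ with a uniform constant, but the scaled modulus $\tilde\omega(t) := \omega(\zeta t)$ satisfies the same Dini integrability as $\omega$ (its Dini integral up to $R$ equals the original Dini integral up to $\zeta R$), so the universal constants carry over intact.
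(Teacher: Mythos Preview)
Your proposal is correct and follows essentially the same rescaling argument as the paper. The only cosmetic difference is that the paper absorbs the source--smallness requirement into the choice of $\zeta$ (taking $\zeta \le (\varepsilon_0/\|\mu\|_{L^q})^{1/(p-n/q)}$ and then using $\kappa\ge 1$ solely for the $L^p$--normalization) rather than into $\kappa$ as you do; either allocation works and produces the same dependencies, and your added remarks on the outer annulus $\zeta\le |Y-Y_0|\le d$ and on the Dini integrability of the rescaled modulus are careful points the paper leaves implicit.
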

\begin{proof} Given a generic function $u \in W^{1,p}$, satisfying Equation \eqref{Eq Introd}, with $a$ obeying \eqref{Hyp a}, \eqref{Dini} and $\mu$ satisfying \eqref{measure cond}, we define the re-scaled function
\begin{equation}\label{rescaled function}
	v(X):=  \dfrac{u(Y_0 + \zeta X)}{\kappa},
\end{equation}
for an arbitrary interior singular point $Y_0 \in \S(u)$.
In view of \eqref{scaling est 02} and \eqref{scaling est 03}, we are led to choose
$$
	\zeta := \min\left \{ 1, \ \frac{1}{2} \dist(Y_0, \partial \Omega), \ \sqrt[p- \frac{n}{q}]{\frac{\varepsilon_0}{\|\mu\|_{L^{q(\Omega)}}}}, \ \omega^{-1}(\tilde{\Lambda}^{-1} \varepsilon_0) \right \},
$$
where $\varepsilon_0$ is the universal constant from Lemma \ref{key lemma}. In the sequel, we take
$$ 
	\kappa := \max \left \{1, \ \sqrt[p]{\dfrac{1}{\zeta^n} \cdot \intav{B_1} |u(X)|^p dX} \right \}.
$$
Under these selections, the re-scaled function $v \in W^{1,p}(B_1)$ fulfills the smallness assumptions of Lemma \ref{key lemma}, and $0 \in \S(v)$.  Applying the presumed proven estimate \eqref{proved under hyp key lemma} to $v$, we find
$$
		\left | u(Y) - u(Y_0) \right | \le C\cdot  \kappa \cdot \left ( \frac{1}{\zeta} \right )^{1+\alpha} |Y-Y_0|^{1+\alpha},
$$
which is precisely the aimed conclusion \eqref{proved general}. 
\end{proof}

\section{Conclusion of the proof} \label{section final proof}

In this Section we finish up the proof of Theorem \ref{main NH}. Initially, it follows from the conclusion of Proposition \ref{prop red}, that we can assume $0\in \S(u)$ and $u$, $a$ and $\mu$ are under the smallness assumptions requested within the statement of Lemma \ref{key lemma}, i.e., 
\begin{eqnarray}
	0 &\in& \S(u) \label{small cond final proof 1.5}\\
	\displaystyle \intav{B_1} |u(X)|^p dX  &\le& 1 \label{small cond final proof 1}\\
	\|\mu\|_{{L^q}}  &\le& \varepsilon_0 \label{small cond final proof 2}\\
	|a(X,\xi) - a(0, \xi)| & \le& \varepsilon_0 \cdot |\xi|^{p-1}, \label{small cond final proof 3}
\end{eqnarray}
where $\varepsilon_0$ is the universal number from Lemma \ref{key lemma}. Under such conditions, we will show that there exists a  sequence of real numbers $\tau_k \to u(0)$, such that

\begin{equation}\label{proof main 01}
 	\intav{B_{\varrho^k}} |u(X) - \tau_k|^p dX \le \varrho^{k \cdot p(1+\alpha)},
\end{equation} 
where $\varrho>0$ is the small universal radius from Lemma \ref{key lemma} and $\alpha$ is a fixed exponent within the range
\begin{equation}\label{proof main range alpha}
	\alpha \in \left (0, \alpha_M \right ) \cap \left ( 0, \frac{q-n}{q(p-1)} \right ].
\end{equation}
We argue by finite induction. The case $k=1$ is precisely the thesis of Lemma \ref{key lemma}. Suppose we have verified \eqref{proof main 01} for $k$. Define the re-normalized function $v \colon B_1 \to \mathbb{R}$ by
\begin{equation}\label{proof main 02}
	v(X) := \dfrac{u(\varrho^k X) - \tau_k}{\varrho^{k \cdot (1+\alpha)}}.
\end{equation}
If we label
\begin{equation}\label{proof main 02.1}
	-\div \left ( a(\varrho^kX, Dv) \right ) =: \mu_k,
\end{equation}
it follows from the structural scaling of the equation and change of variable arguments, as in \eqref{scaling est prel}  that
\begin{equation}\label{proof main 03}
	\begin{array}{lll}
		\|\mu_k\|^q_{L^q(B_1)} &\le& \varrho^{k[p - (1+\alpha)(p-1)]q} \cdot  \varrho^{-kn} \|\mu\|^q_{L^q(B_{\varrho^k})} \\
		&\le& \varepsilon_0^q \cdot\varrho^{kq[p - (1+\alpha)(p-1) - \frac{n}{q}]} . 
	\end{array}
\end{equation}
From the sharp selection of the exponent in \eqref{proof main range alpha} , namely
$$
	\alpha \le \dfrac{q-n}{(p-1)q}
$$ 
and the estimate obtained in \eqref{proof main 03}, we deduce,
\begin{equation}\label{proof main 04}
	\|\mu_k\|_{{L^q(B_1)}} \le \varepsilon_0.
\end{equation}
Also, easily we check that
\begin{equation}\label{proof main 05}
	\begin{array}{lll}
		|a(\varrho^k X,\xi) - a(0, \xi)| &\le&  \tilde{\Lambda} \omega(\varrho^k |X|) \cdot  |\xi|^{{p-1}} \\
		&\le & \varepsilon_0  \cdot  |\xi|^{{p-1}} .
	 \end{array}
\end{equation}
We have shown that $v$ is under the hypotheses of Lemma \ref{key lemma}, which assures the existence of a universally bounded real constant $\tilde{\tau}$ such that
\begin{equation}\label{proof main 06}
	\intav{B_\varrho} | v(X) - \tilde{\tau}|^p dX \le \varrho^{p(1+\alpha)}.
\end{equation}
If we define 
\begin{equation}\label{proof main 07}
	\tau_{k+1} := \tau_k + \varrho^{k(1+\alpha)} \tilde{\tau}
\end{equation}
and rescale estimate \eqref{proof main 06} back, we conclude the proof of \eqref{proof main 01}. It further follows from \eqref{proof main 07} that,
\begin{equation}\label{proof main 08}
	|\tau_{k+1} - \tau_k| \le C \varrho^{k(1+\alpha)},
\end{equation}
for a universal constant $C>0$. In particular $\{\tau_k\}_{k\ge 1}$ is a Cauchy sequence. From \eqref{proof main 01}, we deduce
$$
	\lim\limits_{k\to \infty} \tau_k = u(0).
$$
Yet a consequence of estimate \eqref{proof main 08} is the following convergence rate control
\begin{equation}\label{proof main 08.1}
	\begin{array}{lll}
		| \tau_k - u(0)| &\le& \displaystyle  C \sum\limits_{j={k}}^\infty \varrho^{j(1+\alpha)}  \\
		&\le& \displaystyle \frac{C}{1-\varrho} \varrho^{k(1+\alpha)} 
	\end{array}
\end{equation}
Finally, given any $0<r \ll 1$, let $k$ be the natural number that satisfies
\begin{equation}\label{proof main 09}
	 \varrho^{k+1} \le r < \varrho^k. 
\end{equation}
We estimate
\begin{equation}\label{proof main 10}
	\begin{array}{lll}
		\displaystyle \intav{B_r} |u(X) - u(0)|^p dX &\le&   \left ( \frac{\varrho^k}{r} \right )^n  \displaystyle\intav{B_{\varrho^k}} |u(X) - u(0)|^p dX \\
		&\le&   \frac{2^{p-1}}{\varrho^n} \displaystyle \left ( \intav{B_{\varrho^k}} |u(X) -\tau_k|^p dX + |\tau_k - u(0)|^p \right )\\
		&\le &   \frac{2^{p-1}}{\varrho^n} \cdot \left (1 +\left ( \frac{C}{1-\varrho} \right )^p \right ) \varrho^{k\cdot p(1+\alpha)} \\
		& \le & \left [ \frac{2^{p-1}}{\varrho^n} \cdot \left (1 +\left ( \frac{C}{1-\varrho} \right )^p \right ) \cdot \frac{1}{\varrho^{p(1+\alpha)}} \right ] \cdot r^{p(1+\alpha)} \\
		& = & \tilde{C} \cdot r^{p(1+\alpha)},
	 \end{array}
\end{equation}
for a constant $\tilde{C} > 0$ that depends only upon universal parameters. The $C^{1,\alpha}$ regularity of $u$ at $0$ follows now by standard arguments.  \qed

%%%%%%%%%%%%%%%%%%%%%
%%%%%%%%%%%%%%%%%%%%%
%%%%%%%%%%%%%%%%%%%%%
%%%%%%%%%%%%%%%%%%%%%
\bibliographystyle{amsplain, amsalpha}

\bigskip

\noindent \textsc{Eduardo V. Teixeira} \\
\noindent Universidade Federal do Cear\'a \\
\noindent Departamento de Matem\'atica \\
\noindent Campus do Pici - Bloco 914, \\
\noindent Fortaleza, CE - Brazil 60.455-760 \\
 \noindent \texttt{teixeira@mat.ufc.br}

\end{document}